\newtheorem{thm}{Theorem}[section]
\theoremstyle{definition}
\theoremstyle{remark}
\newtheorem{rem}[thm]{Remark}
\numberwithin{equation}{section}
\begin{document}

\title[On a normality criterion of S. Mandelbrojt ]{On a normality criterion of S. Mandelbrojt }%
\author{P.V.Dovbush}%
\address{Institute of Mathematics and Computer Science  of  Academy of Sciences  of Moldova, 5 Academy  Street,  Kishinev, MD-2028, Republic
of Moldova}%
\email{peter.dovbush@gmail.com}%
\thanks{I would like to thank Professor Robert B. Burckel for all his help during my work on this paper.}%
\subjclass[2010]{32A19}%
\keywords{Normal families, Holomorphic functions of several complex variables.}%

\begin{abstract}
Extension of classical Mandelbrojt's criterion for normality of a family of holomorphic zero-free functions of several complex variables is given. We show that a family of holomorphic functions of several complex variables whose corresponding Levi form are uniformly bounded away from zero is normal.
\end{abstract}
\maketitle
\section{Introduction and Main Result}

In 1929, Mandelbrojt \cite[p.189]{Mandelbrojt1929}  asserted his criterion for normality of a family of holomorphic zero-free functions of one complex variables. That criterion can be considered as a variation of Montel's Theorem.

The main purpose of this paper is to establish an extension of Mandelbrojt's theorem to several complex variables.

Let $\mathcal F$ be a  family of zero-free holomorphic functions in a domain $\Omega$ and $D$ be a subdomain in $\Omega$ such that
$\overline{D}\subset\Omega.$ So that  the quantities
$$m(f,D)=\left\{ \begin{array}{llll} \sup\limits_{z,w\in D} \frac{\ln |f(z)|}{\ln|f(w)|}, & \mbox{ if } & |f(w)|\neq 1 \textrm{ for all } w \textrm{ in } D;\\
\infty, & \mbox{ if } & |f(w)|= 1  \textrm{ for some } w \textrm{ in } D, \end{array} \right. $$
$$m'(f,D)=\sup_{z,w\in D}\frac{|f(z)|}{|f(w)|},$$
$$L(f,D)=\min [ m(f,D),m'(f,D)],$$
are well defined for each function $f \in \mathcal F.$

Our main result is as
follows:

\begin{thm} \label{Mandelbrojt} Let $\mathcal F$ be a family of zero-free holomorphic functions in a domain $\Omega.$ Then $\mathcal F$ is normal in $\Omega$
if and only if for each point $z_0 \in \Omega$ there exists a ball $\overline{B(z_0,r_0)}\subset \Omega$ such that the  the set of quantities
$L(f,B(z_0,r_0)),$ $f\in \mathcal F,$ is bounded.
\end{thm}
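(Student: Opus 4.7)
The plan is to prove both directions by reducing, through a careful case analysis, to two classical tools: Harnack's inequality for pluriharmonic functions, and Montel's theorem in $\mathbb{C}^n$. The key observation is that on any ball on which $f \in \mathcal{F}$ is zero-free, $\ln|f|$ is pluriharmonic and hence harmonic as a function on $\mathbb{R}^{2n}$.

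For necessity, fix $z_0 \in \Omega$ and choose $r_0 > 0$ with $\overline{B(z_0,2r_0)} \subset \Omega$. Suppose for contradiction that a sequence $f_k \in \mathcal{F}$ satisfies $L(f_k, B(z_0, r_0)) \to \infty$. Normality furnishes a subsequence $f_{k_j}$ converging uniformly on compact subsets of $\Omega$, either to a holomorphic limit $f$ or to $\infty$. The several-variable Hurwitz theorem forces any holomorphic limit to be either zero-free or identically zero. If $f$ is zero-free, then $|f|$ is bounded and bounded away from zero on $\overline{B(z_0, r_0)}$, so $m'(f_{k_j}, B(z_0, r_0))$ is eventually bounded. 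If $f \equiv 0$ or $f_{k_j} \to \infty$, then eventually $|f_{k_j}| < 1$ respectively $|f_{k_j}| > 1$ throughout $B(z_0, 2r_0)$, and $u_{k_j} := \mp \ln|f_{k_j}|$ is positive and pluriharmonic on that ball. Harnack's inequality applied to the pair $B(z_0,r_0) \subset B(z_0, 2r_0)$ yields a dimensional constant $C$ with $\sup u_{k_j} \leq C \inf u_{k_j}$, whence $m(f_{k_j}, B(z_0,r_0)) \leq C$. In every case $L(f_{k_j}, B(z_0, r_0))$ is eventually bounded, contradicting the hypothesis.

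For sufficiency, normality is a local property, so it suffices to argue on any single ball $B_0 = B(z_0, r_0)$ on which $L(f, B_0) \leq K$ for all $f \in \mathcal{F}$. Given a sequence $f_k$, since $L = \min(m, m')$, one may pass to a subsequence on which either $m'(f_k, B_0) \leq K$ for all $k$, or $m(f_k, B_0) \leq K$ for all $k$. In the $m'$ case, $|f_k|/|f_k(p)|$ is pinched between $1/K$ and $K$ on $B_0$ for any fixed $p \in B_0$; a further subsequence on which $|f_k(p)|$ is either bounded or tends to $\infty$ exhibits $\{f_k\}$ as locally uniformly bounded (so Montel in $\mathbb{C}^n$ supplies a holomorphic limit) or as uniformly divergent to $\infty$. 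In the $m$ case, the condition $m(f_k, B_0) < \infty$ forces $|f_k| \neq 1$ on the connected set $B_0$, so $|f_k|$ is always $< 1$ or always $> 1$ on $B_0$; the former case yields immediate uniform boundedness, while in the latter, $\ln|f_k| > 0$ satisfies $\sup \ln|f_k| \leq K \inf \ln|f_k|$, so a further subsequence on which $\inf \ln|f_k|$ is either bounded or diverges reduces again to Montel or to uniform divergence to $\infty$. The main obstacle is simply the bookkeeping of these nested cases inherent in $L = \min(m, m')$ together with the absence of any a priori bound on $|f_k|$; once the sequence has been suitably refined, the argument collapses onto Hurwitz, Harnack, and Montel.
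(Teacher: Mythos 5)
Your proof is correct, and it is worth recording how it relates to the paper's own argument. For necessity, your contradiction-and-subsequence scheme (extract a locally uniformly convergent subsequence, invoke the several-variable Hurwitz theorem to reduce to the three cases limit $\equiv 0$, zero-free limit, limit $\equiv\infty$, then control $m$ via Harnack applied to the positive pluriharmonic function $\mp\ln|f_{k_j}|$ in the first and third cases and control $m'$ directly in the second) coincides in substance with the \emph{alternative} proof the paper gives in its second Remark; the paper's primary necessity proof is different, going through the several-variable Marty theorem to obtain spherical equicontinuity, an explicit chordal-distance estimate $\chi\geq 1/\sqrt{10}$ to split $\mathcal F$ near $z_0$ into the subfamilies $\{|f|<2\}$ and $\{1/|f|<2\}$, and a separate normal-families compactness argument producing a uniform lower bound $\varepsilon$ for the functions that are not uniformly small. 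For sufficiency, you organize the cases by which of $m$, $m'$ realizes the bounded minimum, whereas the paper splits $\mathcal F$ into $\{|f(z_0)|\leq 1\}$ and $\{|f(z_0)|>1\}$ and handles the second family by passing to $1/f$ and using Hurwitz to recover convergence of $f_{j_k}$ from that of $1/f_{j_k}$; your version avoids the reciprocal trick and Hurwitz in this direction, at the cost of admitting the outcome of uniform divergence to $\infty$ explicitly in two places (when $|f_k(p)|\to\infty$ and when $\inf\ln|f_k|\to\infty$), which you handle correctly, and your key bookkeeping observation --- that $m(f_k,B_0)<\infty$ forces $|f_k|\neq 1$, hence $|f_k|<1$ or $|f_k|>1$ throughout the connected ball --- is exactly the dichotomy underlying the paper's cases (e) and (f). The only point you should make explicit is that the Harnack constant for the pair $B(z_0,r_0)\subset B(z_0,2r_0)$ depends only on the real dimension and the fixed radius ratio, hence is uniform over the sequence, which is what the necessity argument requires.
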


\section{Preliminaries}

A family $\mathcal F$ of holomorphic functions on a domain $\Omega \subseteq C^n,$ $n\geq 1,$ is normal in $\Omega$ iff every sequence of function $\{f_n\}\subseteq \mathcal F$ contains either a subsequence which converges to a limit function $f \not \equiv \infty$ uniformly on each compact subset of $\Omega,$ or a subsequence which converges uniformly to $\infty$ on each compact subset.

A family $\mathcal F$ is said to be normal at a point $z_0 \in \Omega$ if it is normal in some neighborhood of $z_0.$

The connection between these definitions is given by
 \begin{thm}  \label{t2.1} A family of holomorphic functions $\mathcal F$ is normal in a domain $\Omega \subset \mathbf C^n$ iff
$\mathcal F$ is normal in at each point of $\Omega.$
\end{thm}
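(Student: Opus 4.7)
The forward implication is immediate from the definitions: if $\mathcal{F}$ is normal in $\Omega$, then for any $z_0 \in \Omega$ one may pick a ball $U \ni z_0$ with $\overline{U} \subset \Omega$, and any sequence in $\mathcal{F}$ admits a subsequence converging uniformly on compact subsets of $\Omega$, hence on compact subsets of $U$, so $\mathcal{F}$ is normal at $z_0$.

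For the converse, the plan is the standard diagonalize-and-glue strategy, with careful attention to the two alternatives in the definition of normality (holomorphic limit versus $\infty$). Using second countability of $\mathbf{C}^n$, I would first extract a countable cover $\Omega = \bigcup_{j \geq 1} B_j$ by open balls $B_j$ with $\overline{B_j} \subset \Omega$ on a neighborhood of each of which $\mathcal{F}$ is normal. Given $\{f_n\} \subset \mathcal{F}$, a diagonal extraction produces a subsequence $\{g_k\}$ which, on each $B_j$ separately, either converges uniformly on compact subsets to some holomorphic $h_j$ or converges uniformly to $\infty$. Let $A$ be the union of those $B_j$ of the first type and $B$ the union of those of the second type; both are open and $\Omega = A \cup B$. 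If some $z$ lay in $A \cap B$, the sequence $\{g_k(z)\}$ would have to tend simultaneously to a finite value and to $\infty$, which is impossible; hence $A \cap B = \emptyset$, and the connectedness of the domain $\Omega$ forces $A = \Omega$ or $B = \Omega$.

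If $A = \Omega$, the local limits $h_j$ agree on pairwise overlaps by uniqueness of pointwise limits and patch to a single holomorphic function $h$ on $\Omega$. To promote uniform convergence on compact subsets of each $B_j$ to uniform convergence on an arbitrary compact $K \subset \Omega$, one covers $K$ by finitely many $B_{j_1}, \ldots, B_{j_N}$ and, via a Lebesgue-number argument, writes $K$ as a finite union of compact pieces, each contained in some $B_{j_i}$; a maximum of finitely many local estimates completes the bound. The case $B = \Omega$ is handled identically with $\infty$ in place of $h$. The only non-routine step is the dichotomy above: ruling out mixed behavior across overlapping charts is exactly what connectedness of $\Omega$ provides, and this is the main obstacle that distinguishes the proof from the purely formal diagonalization.
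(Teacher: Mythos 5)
Your proof is correct and is essentially the argument the paper relies on: the paper gives no proof of its own but defers to Theorem 3.2.1 of Schiff, whose proof is exactly this countable cover, diagonal extraction, the open disjoint sets $A$ and $B$ with connectedness ruling out mixed behavior, and gluing of the local limits. No substantive difference.
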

The proof of this theorem is the same as the proof of Theorem 3.2.1 in \cite{Schiff1993}.

Normality of a sequence of zero-free holomorphic functions imposes a tight restriction on limit functions, as the following theorem of Hurwitz's shows.
 \begin{thm} (Hurwitz's theorem). On a connected open set,
the normal limit of nowhere-zero holomorphic functions of several complex variables is either nowhere zero or identically
equal to zero.
\end{thm}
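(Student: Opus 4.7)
The plan is to reduce to the classical one-variable Hurwitz theorem by slicing with a suitable complex line. Let $(f_n)$ be a sequence of zero-free holomorphic functions on the connected open set $\Omega \subseteq \mathbf{C}^n$ converging uniformly on compact subsets to a holomorphic function $f$ (the uniform limit of holomorphic functions is holomorphic). Assume $f \not\equiv 0$ on $\Omega$; I aim to show that $f(z) \neq 0$ for every $z \in \Omega$.

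Argue by contradiction, supposing $f(z_0) = 0$ for some $z_0 \in \Omega$. By the identity principle for holomorphic functions of several variables, together with the connectedness of $\Omega$, $f$ cannot vanish identically on any nonempty open subset of $\Omega$. Hence in every ball $B(z_0, r) \subseteq \Omega$ there exists a point $w$ with $f(w) \neq 0$. Fix such $r$ and $w$, and parametrize the complex line through $z_0$ and $w$ by $\zeta \mapsto z_0 + \zeta(w - z_0)$. For $|\zeta| < \rho := r/|w - z_0|$ this image lies in $B(z_0, r)$, so the one-variable functions
\[
g_n(\zeta) := f_n\bigl(z_0 + \zeta(w - z_0)\bigr)
\]
are holomorphic and zero-free on the disc $\{|\zeta| < \rho\}$, which contains both $0$ and $1$ (since $\rho > 1$).

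Uniform convergence on compact subsets of $\Omega$ transfers to uniform convergence on compact subsets of the parameter disc, so $g_n$ converges locally uniformly to $g(\zeta) := f(z_0 + \zeta(w - z_0))$. Applying the classical one-dimensional Hurwitz theorem to this sequence on the disc $\{|\zeta|<\rho\}$, the limit $g$ must be either identically zero or nowhere zero there. But $g(0) = f(z_0) = 0$ and $g(1) = f(w) \neq 0$, a contradiction.

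The reduction itself is routine; the one delicate point is ensuring the slicing line can be chosen so that $g \not\equiv 0$, which is precisely what the identity principle in several variables supplies. No step presents a genuine obstacle once the line is chosen through a nonvanishing point of $f$.
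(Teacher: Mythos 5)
Your argument is correct: the reduction to the one-variable Hurwitz theorem by slicing along the complex line through the zero $z_0$ and a nearby point $w$ where $f(w)\neq 0$ (whose existence the identity principle guarantees once $f\not\equiv 0$) is the standard and complete way to prove this statement, and every step checks out, including the observation that $\rho=r/|w-z_0|>1$ so that both $\zeta=0$ and $\zeta=1$ lie in the parameter disc. Note, however, that the paper states this theorem without giving any proof, so there is no in-paper argument to compare against; your slicing proof supplies exactly the routine verification the author omitted.
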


On normality of a family of holomorphic functions of several complex variables, many results are
obtained by G.Julia \cite{Julia1926}. From them, we denote the following interesting
theorem:

\begin{thm} \label{t2.2} (Fundamental Normality Test) Let $\mathcal F$ be a family of holomorphic functions in a domain in $\mathbf C^n$ if
every function $\mathcal F$ does not take two fixed different values then $\mathcal F$ is normal.
\end{thm}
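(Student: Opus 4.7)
By Theorem~\ref{t2.1} it suffices to check normality in a ball about each $z_0 \in \Omega$. Fix $z_0$ and $\overline{B(z_0,r)} \subset \Omega$. If $a \neq b$ are the two omitted values, composing with the affine map $w \mapsto (w-a)/(b-a)$ reduces to the case where every $f \in \mathcal F$ is a holomorphic mapping $B(z_0,r) \to X$, with $X := \mathbf C \setminus \{0,1\}$. Since $X$ is a hyperbolic Riemann surface (uniformized by the unit disc via the elliptic modular function), it carries an intrinsic hyperbolic metric $\rho$, while the ball $B(z_0,r)$ carries its (finite, continuous) Kobayashi metric $k$.

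The key tool is the holomorphic distance-decreasing inequality
$$\rho(f(z_1), f(z_2)) \leq k(z_1, z_2), \qquad f \in \mathcal F, \ z_1, z_2 \in B(z_0,r).$$
For any compact $K \subset B(z_0,r)$ set $M_K := \sup_{z\in K} k(z,z_0) < \infty$; then $f(K) \subset \overline{B_\rho(f(z_0), M_K)}$ for every $f \in \mathcal F$.

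Given a sequence $\{f_j\} \subset \mathcal F$, pass to a subsequence so that $f_j(z_0) \to c \in \mathbf C \cup \{\infty\}$. If $c \in X$, then completeness of $X$ in $\rho$ makes the $\rho$-ball of radius $M_K+1$ about $c$ a relatively compact subset of $X$, hence bounded in $\mathbf C$; for large $j$ every $f_j(K)$ lies in this set, so $\{f_j\}$ is locally uniformly bounded on $B(z_0,r)$, and the classical bounded version of Montel's theorem in several variables extracts a locally uniformly convergent subsequence. If $c \in \{0,1,\infty\}$, the cuspidal geometry of $X$ gives that, for any fixed $R$, the $\rho$-balls $B_\rho(w,R)$ shrink in the spherical metric to $\{c\}$ as $w \to c$ in $X$; applied with $R = M_K$ this forces $f_j \to c$ uniformly on every compact $K \subset B(z_0,r)$. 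In either case one obtains a subsequence converging locally uniformly on $B(z_0,r)$ to a holomorphic function or to $\infty$, which is normality of $\mathcal F$ at $z_0$.

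The principal hurdle is justifying the cuspidal statement: that at each puncture $c \in \{0,1,\infty\}$ the $\rho$-balls of fixed radius around points $w$ collapse spherically to $\{c\}$ when $w \to c$. This is the geometric content of the classical one-variable Montel theorem, and follows either from the standard asymptotics of the hyperbolic density near each puncture (of the form $|dw|/(|w|\log(1/|w|))$ near $0$, and analogous expressions at $1$ and $\infty$), or from the parabolic horoball structure of the modular covering at its three cusps.
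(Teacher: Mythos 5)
Your argument is correct, but it takes a genuinely different route from the paper's. The paper disposes of Theorem~\ref{t2.2} by slicing: restrict each $f\in\mathcal F$ to complex lines, apply the classical one-variable Fundamental Normality Test on each line (the omitted values are inherited by the restrictions), then invoke Alexander's theorem (Theorem~\ref{t2.4}) to pass from line-wise normality to normality on the ball, globalizing via Theorem~\ref{t2.1}. You instead inline the hyperbolic-geometric content underlying the one-variable theorem and observe that it scales directly to $\mathbf C^n$: the distance-decreasing inequality between the Kobayashi metric of the ball and the complete hyperbolic metric of $\mathbf C\setminus\{0,1\}$, together with the dichotomy at $f_j(z_0)$ --- accumulation inside the thrice-punctured sphere yields local uniform boundedness (then the bounded Montel theorem, which in several variables is elementary via Cauchy estimates and Arzel\`a--Ascoli, so there is no circularity with the paper's Theorem~\ref{t2.3}), while accumulation at a puncture, combined with the cusp asymptotics of the hyperbolic density, forces spherical collapse of bounded $\rho$-balls and hence locally uniform convergence to the constant $0$, $1$, or $\infty$, all of which are admissible normal limits. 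The step you rightly flag as the crux --- that $\rho$-balls of fixed radius about $w$ shrink spherically to the cusp as $w$ tends to it --- is standard and does follow from your stated density bound: integrating $\lambda(w)\geq c/\left(|w|\log(1/|w|)\right)$ shows the $\rho$-distance from $w$ to the complement of any fixed spherical neighborhood of the cusp grows without bound, so the plan closes. What the paper's route buys is brevity, at the cost of taking Alexander's nontrivial theorem and the one-variable test as black boxes; what yours buys is self-containedness modulo standard hyperbolic geometry, avoidance of Alexander's theorem altogether, and extra generality, since the same proof gives normality for families mapping into any complete hyperbolic (taut) target, with $\mathbf C\setminus\{a,b\}$ as the special case.
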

At first sight it is surprising  to see that a condition on the images of a family could
guarantee normality.
However this result is not surprising if it is considered in
light of the uniformization theorem  and the Poincar\'{e} metric on the
domain $\mathbf C \setminus \{a,b\}$.

Let $\mathcal F$ be a family of holomorphic functions in a domain $\Omega.$ We said that $\mathcal F$ to be locally bounded in $\Omega$ if for each point $a\in \Omega$ we can find a ball $B(a,r):=\{z\in \mathbf C^n :|z-a|<r\}$ belonging to $\Omega$  and a positive number $M>0$ such that for each functions $f \in \mathcal F,$ the inequality  $|f(z)|<M $ holds in $B(a,r).$

\begin{thm} \label{t2.3} (Montel's Theorem). Let $\mathcal F $  be a family of
holomorphic functions on $\Omega.$  If $\mathcal F $  is locally uniformly bounded in $\Omega,$ then $\mathcal F $ is normal in $\Omega.$
\end{thm}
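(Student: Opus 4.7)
The plan is to prove normality locally by combining the Cauchy integral formula, which promotes local boundedness to equicontinuity, with the Arzelà--Ascoli theorem. By Theorem \ref{t2.1}, it suffices to show that $\mathcal F$ is normal at every point $z_0 \in \Omega$, so I fix such a $z_0$ for the remainder of the argument.

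By hypothesis, there is a ball $B(z_0, r) \subset \Omega$ and a constant $M > 0$ with $|f(z)| \leq M$ for all $f \in \mathcal F$ and $z \in B(z_0, r)$. Inside this ball choose a polydisc $P = \{z : |z_j - z_{0,j}| < \rho\}$ with closure in $B(z_0,r)$. For $z, w \in \frac{1}{2}P$ (the concentric polydisc of half the polyradius), I write $f(z) - f(w)$ as a single Cauchy integral over the distinguished boundary $T = \{\zeta : |\zeta_j - z_{0,j}| = \rho\}$:
\[
f(z) - f(w) = \frac{1}{(2\pi i)^n}\int_T f(\zeta)\left[\prod_{j=1}^n \frac{1}{\zeta_j - z_j} - \prod_{j=1}^n \frac{1}{\zeta_j - w_j}\right] d\zeta_1 \cdots d\zeta_n.
\]
Expanding the bracket as a telescoping sum and using the bounds $|\zeta_j - z_j|, |\zeta_j - w_j| \geq \rho/2$ together with $|f| \leq M$ yields a Lipschitz estimate $|f(z) - f(w)| \leq C\,M\,|z - w|$ valid for every $f \in \mathcal F$, where $C$ depends only on $n$ and $\rho$. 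Hence $\mathcal F$ is equicontinuous and uniformly bounded on the compact set $\overline{\frac{1}{4}P}$, which is a neighborhood of $z_0$.

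Finally, I invoke Arzelà--Ascoli: every sequence $\{f_k\} \subset \mathcal F$ has a subsequence converging uniformly on $\overline{\frac{1}{4}P}$ (and, by a standard exhaustion/diagonal argument, on every compact subset of a neighborhood of $z_0$) to a continuous function $f$. The Weierstrass theorem in several variables — immediate from passing to the limit under the Cauchy integral — shows that $f$ is holomorphic, hence $f \not\equiv \infty$, and uniform boundedness rules out the second alternative in the definition of normality. This proves normality at $z_0$, and therefore on $\Omega$.

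There is no serious obstacle here; the only technical point is the Cauchy telescoping estimate that turns an $L^\infty$ bound on $\mathcal F$ into a uniform Lipschitz bound, and this is the heart of the several-variables Montel theorem. Everything else (Arzelà--Ascoli, the Weierstrass convergence theorem, and the reduction to a pointwise statement via Theorem \ref{t2.1}) is off-the-shelf.
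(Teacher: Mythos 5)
Your proof is correct, but it takes a genuinely different route from the paper's. The paper does not prove Theorem \ref{t2.3} directly: it deduces it (in one stroke with the Fundamental Normality Test, Theorem \ref{t2.2}) by \emph{slicing} --- combining the one-variable Montel theorem \cite[p.~35]{Schiff1993} with Alexander's theorem (Theorem \ref{t2.4}), which says a family on the ball is normal iff its restriction to every complex line through the center is normal, and with the locality of normality (Theorem \ref{t2.1}): a locally bounded family restricts on each line to a locally bounded, hence normal, one-variable family, and Alexander's theorem then gives normality on a ball about each point. Your argument is instead self-contained: the telescoping estimate for the difference of Cauchy kernels over the distinguished boundary, which converts the sup-bound $M$ into a uniform Lipschitz bound with constant depending only on $n$ and $\rho$, is the right several-variables replacement for the one-variable Cauchy estimate; the Arzel\`a--Ascoli/diagonal step and the Weierstrass convergence theorem are applied correctly, and uniform boundedness rightly rules out the $\infty$ alternative in the definition of normality, after which Theorem \ref{t2.1} globalizes as in the paper. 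As for what each approach buys: your direct proof has much lighter prerequisites, since Alexander's theorem is a substantially deeper result than Montel's theorem itself, whereas the paper's slicing argument is needed anyway for Theorem \ref{t2.2} (where no Cauchy-estimate argument exists, the one-variable input there being the Fundamental Normality Test), so invoking it once for both theorems keeps the exposition short. One cosmetic remark: your Lipschitz bound already gives equicontinuity on every compact subset of $\tfrac{1}{2}P$, so the further passage to $\tfrac{1}{4}P$ is harmless but unnecessary.
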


In case of several complex variables Theorem \ref{t2.2} and \ref{t2.3} are an easy consequence of Montel's Theorem \cite[p. 35]{Schiff1993} and Fundamental Normality Test \cite[p. 54]{Schiff1993} from one complex variables, Theorem \ref{t2.1} and the following
theorem of Alexander \cite[Theorem  6.2]{Alexander1974}.
\begin{thm} \label{t2.4} A  family  $\mathcal F$  of  analytic  functions  on  open unit ball $B:=B(0,1) \subset \mathbf C^n$ is  normal  if  and  only  if
the  restriction  of $\mathcal F$   to  every complex  line  through  the  origin  is normal.    A  local  version  at  the
origin  of  the  radial  theorem  also  holds.
\end{thm}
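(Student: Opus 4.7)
The necessity direction is straightforward: if $\{f_k\} \subset \mathcal F$ has a subsequence converging compactly on $B$, then its restriction to any complex line $L$ through the origin converges compactly on the disk $L \cap B$, so $\mathcal F$ restricted to $L$ is normal. The substance lies in the sufficiency direction.

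For sufficiency, my plan is to use the Marty criterion in several variables, namely that $\mathcal F$ is normal on an open $\Omega \subset \mathbf C^n$ if and only if the spherical derivative $f^\#(z) = \|\nabla f(z)\|/(1+|f(z)|^2)$ is locally uniformly bounded over $\mathcal F$. By hypothesis and the one-variable Marty criterion, for each unit $v$ in the sphere $S^{2n-1} \subset \mathbf C^n$ and each compact $K \subset \mathbf D$, the quantity $\Psi_K(v) := \sup_{f \in \mathcal F,\ \lambda \in K} |\nabla f(\lambda v) \cdot v|/(1+|f(\lambda v)|^2)$ is finite. Since $\Psi_K$ is a supremum of jointly continuous functions of $v$, it is lower semicontinuous on the compact sphere, and the Baire category theorem produces a nonempty open $U \subset S^{2n-1}$ on which $\Psi_K$ is bounded by some $N$. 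Evaluating at $\lambda = 0$ and using that $v \mapsto \nabla f(0) \cdot v$ is $\mathbf C$-linear, plus the elementary fact that a $\mathbf C$-linear functional on $\mathbf C^n$ bounded on an open subset of the sphere is bounded in operator norm, I would deduce $\|\nabla f(0)\| \leq N'(1+|f(0)|^2)$ uniformly over $f \in \mathcal F$, yielding the Marty bound $f^\#(0) \leq N'$ at the origin.

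To extend this pointwise bound to a local uniform bound in a neighborhood of the origin, I would integrate the directional derivative bounds along radial segments in $U$-directions to obtain uniform boundedness of $|f|$ on the open cone $\{\lambda v : v \in U,\ |\lambda| \leq r\} \subset B$; Cauchy's integral formula on small polydisks contained in the interior of this cone then yields uniform bounds on $\|\nabla f\|$, and hence on $f^\#$, in a full neighborhood $V$ of the origin, establishing the local-at-origin version. For the global statement, given any sequence $\{f_k\} \subset \mathcal F$, I extract via the local version a subsequence converging compactly on $V$, then further extract over a countable dense set of line-directions, using the hypothesized normality along each line through the origin, so that $\{f_k|_{L_v}\}$ converges compactly on the full disk $L_v \cap B$. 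Compatibility of these limits on $L_v \cap V$ patches them into convergence on a dense subset of $B$, and equicontinuity (obtained from Marty bounds repeated at every point of $B$) promotes the dense convergence to compact convergence on $B$.

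The main obstacle is the step upgrading the pointwise bound on $f^\#(0)$ to a local uniform bound near the origin: the hypothesis supplies only one-directional derivative information at each nonzero point of $B$ (namely, the direction of the unique line through the origin passing through that point), so the passage to full-gradient control requires a careful combination of Cauchy estimates on polydisks contained in the controlled cone with the holomorphy of the $f_k$. A secondary subtlety is the spanning argument passing from boundedness of a $\mathbf C$-linear functional on an open subset of the sphere to an absolute bound, which must produce a constant depending only on the geometry of $U$ and not on the family $\mathcal F$.
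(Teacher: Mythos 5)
The paper does not actually prove this statement: it is quoted as Alexander's theorem and justified purely by citation to \cite[Theorem 6.2]{Alexander1974}, so your argument must stand on its own --- and it has two genuine gaps, located exactly where you flag your ``obstacles.'' The Baire step itself is sound: $\Psi_K$ is lower semicontinuous, the closed sets $\{\Psi_K\le N\}$ exhaust the sphere, so some spherical cap $U$ carries a uniform radial spherical-derivative bound, and with your linear-functional lemma (which is fine, with constant depending only on the cap) you get $f^\#(0)\le N'$ at the single point $0$. But already the next assertion is wrong as stated: integrating the radial bound controls the chordal distance $\chi(f(\lambda v),f(0))$, not $|f|$, and $f(0)$ may be near $\infty$ (think of constants $c_k\to\infty$); this is repairable by splitting into $|f(0)|\le 1$ and $|f(0)|>1$ and passing to $1/f$, which stays holomorphic on a thin cone where $|f|\ge 1/2$. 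The irreparable part is the upgrade to ``a full neighborhood $V$ of the origin'': Cauchy's integral formula on polydisks \emph{contained in the cone} bounds $\nabla f$ only at points \emph{of the cone}, and the cone $\{\lambda v: v\in U,\ |\lambda|<r\}$, though open away from $0$, contains no ball around the origin unless $U$ is the whole sphere --- its directions all lie in $S^1\cdot U$. So you obtain Marty bounds on a cone, not near $0$, and even the local version of the theorem is not established.

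The second gap is the phrase ``Marty bounds repeated at every point of $B$.'' At a point $p\ne 0$ exactly one line through the origin passes through $p$; the restrictions of $\mathcal F$ to the other lines through $p$ are not hypothesized to be normal, so the sphere-of-directions/Baire argument cannot be rerun at $p$, and the hypothesis gives only one-directional (radial) derivative control there. Spherical equicontinuity on compact subsets of $B$ is, by Marty's theorem, equivalent to the normality you are trying to prove, so this step is circular, and the dense-lines patching argument has nothing to promote it with. This is precisely where the real difficulty of Alexander's theorem lives: his proof runs through an entirely different mechanism, characterizing normality by locally bounded projective volumes (areas) of graphs and recovering the volume of the graph over the ball from its slices by lines through the origin via an integral-geometric averaging, with a compactness theorem supplying the limits --- machinery that the Baire-plus-Cauchy-estimates scheme cannot substitute for. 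Only your necessity direction and the two auxiliary lemmas survive scrutiny.
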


It is well known that a family of holomorphic functions can be normal without being equicontinuous or locally uniformly bounded (see for example \cite[Example 2.2.3]{Schiff1993}).

In contradicting of Theorem \ref{t2.3}, where local boundedness was the key ingredient,  the normality of holomorphic functions is characterized by a condition in which the Levi form of corresponding functions is locally uniformly bounded on compact subsets (see Timoney \cite{Timoney1980} or Dovbush
\cite[Lemma 1]{Dovbush1981}).

\begin{thm} \label{cm} (Marty's Theorem) A family of holomorphic functions $\mathcal F$ is normal in a domain $\Omega \subset \mathbf C^n$ if and only if
for each compact set $K\subset \Omega$ there is a constant $C=C(K)$ such that for all $f \in \mathcal F$  at each point $z \in K$ the Levi form $L_z(\log(1+|f|^2),v)$ satisfy
\begin{equation}\label{eq1}
L_z(\log(1+|f|^2),v):= \sum^n_{\mu,\nu}\frac{\partial^2\ln(1+|f(z)|^2)}{\partial z_\mu \partial\overline{z}_\nu}v_\mu \overline{v}_v\leq C|v|^2 \textrm{ for all } v \in \mathbf C^n .
\end{equation}
\end{thm}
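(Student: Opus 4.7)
The plan is to compute the Levi form of $\log(1+|f|^2)$ explicitly and to reduce the multivariate statement to the classical one-variable Marty's theorem via Alexander's theorem. A direct differentiation, using $\partial_{\bar\nu}f=0$, gives
\[
L_z\bigl(\log(1+|f|^2),v\bigr)=\frac{\bigl|\sum_{\mu}\partial_\mu f(z)\,v_\mu\bigr|^2}{(1+|f(z)|^2)^2},
\]
which for $|v|=1$ is exactly the square of the spherical derivative, at $\zeta=0$, of the one-variable restriction $\zeta\mapsto f(z+\zeta v)$. Thus the Levi form bound in \eqref{eq1} is equivalent to a uniform bound on the spherical derivatives of all complex-line restrictions of members of $\mathcal F$.

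For the necessity I would argue by contradiction: assuming $\mathcal F$ normal, suppose there exist a compact $K\subset\Omega$, functions $f_k\in\mathcal F$, points $z_k\in K$, and unit vectors $v_k$ with $L_{z_k}(\log(1+|f_k|^2),v_k)\to\infty$. Passing to subsequences I may assume $z_k\to z_*\in K$ and $v_k\to v_*$, and by normality on some neighborhood $U$ of $z_*$ either $f_k$ converges to a holomorphic $f$ or $f_k\to\infty$, in each case uniformly on compact subsets of $U$. In the first case Cauchy's estimates yield $\partial_\mu f_k\to\partial_\mu f$ uniformly on compact subsets of $U$, so the explicit formula above forces the Levi forms to converge to a finite limit, contradicting the blow-up. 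In the second case, on any compact subset of $U$ one has $|f_k|>1$ for large $k$, and I would use the decomposition
\[
\log(1+|f_k|^2)=2\log|f_k|+\log\bigl(1+|f_k|^{-2}\bigr);
\]
the first summand is locally pluriharmonic (its Levi form vanishes), while the second is handled by applying the same Levi-form formula to $g_k:=1/f_k$, which together with its partials tends to $0$ uniformly on compacta by Cauchy's estimates. Either way the Levi forms remain bounded near $z_*$, a contradiction.

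For the sufficiency I would use Theorem \ref{t2.1} to reduce to showing normality at each $z_0\in\Omega$ and then apply the local version of Alexander's theorem (Theorem \ref{t2.4}) to reduce further to showing that every one-variable restriction $g_f(\zeta)=f(z_0+\zeta v)$, $f\in\mathcal F$, forms a normal family. Fixing a closed ball $\overline{B(z_0,r)}\subset\Omega$ with associated constant $C$ and a unit vector $v$, the explicit formula yields $g_f^{\#}(\zeta)\le\sqrt C$ on $|\zeta|\le r$, so the one-variable Marty's theorem supplies the required normality in the sense used throughout this paper.

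The main technical obstacle is the case $f_k\to\infty$ in the necessity argument: a naive estimate of the Levi form of $\log(1+|f_k|^2)$ is not well behaved as $|f_k|\to\infty$, and the pluriharmonic-plus-remainder decomposition displayed above is the key trick, because it transfers the estimate onto $1/f_k$ where Cauchy's estimates apply harmlessly. A small but important subtlety in the sufficiency is that Theorem \ref{t2.4} is phrased for the unit ball centered at the origin, so one must invoke its local version (after an affine change of coordinates placing $z_0$ at the origin) rather than the global statement.
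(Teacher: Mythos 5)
Your proof is correct, and it is worth noting that the paper itself never proves Theorem \ref{cm} at all: it simply cites Timoney and Dovbush for the statement, so your argument supplies a self-contained proof where the paper offers only a reference. Your sufficiency half is in fact exactly the technique the paper does use elsewhere, in its proof of Theorem \ref{suf}: the identity $L_{z}(\log(1+|f|^2),v)=\bigl|\sum_\mu \partial_\mu f(z)v_\mu\bigr|^2/(1+|f(z)|^2)^2$, which for $|v|=1$ is the squared spherical derivative of the line restriction, followed by one-variable Marty, Alexander's Theorem \ref{t2.4} (correctly in its local form at the translated origin), and localization via Theorem \ref{t2.1}. The necessity half is where you go beyond anything in the paper, and your handling of the case $f_k\to\infty$ is sound: the decomposition $\log(1+|f_k|^2)=2\log|f_k|+\log(1+|1/f_k|^2)$, valid on the set where $|f_k|>1$ (which eventually contains a fixed closed ball around $z_*$, so that the pluriharmonicity of $\log|f_k|$ kills the first term), amounts precisely to the classical invariance of the spherical derivative under $f\mapsto 1/f$, i.e.\ $L_z(\log(1+|f|^2),v)=L_z(\log(1+|1/f|^2),v)$ wherever $f\neq 0$; one could state it in that one-line form, but your version makes the mechanism transparent and shows exactly where Cauchy's estimates enter. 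The only point deserving explicit mention is the one you implicitly use: the contradiction sequence must be confined to a single compact ball $\overline{B(z_*,\rho)}\subset U$ so that ``for large $k$'' is uniform over the points $z_k$, and your passage to subsequences with $z_k\to z_*$, $v_k\to v_*$ accomplishes this. In short: correct, and methodologically consistent with the paper's own use of the Levi-form/spherical-derivative dictionary, while adding the necessity argument the paper delegates to the literature.
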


Unfortunately, in practice Marty's criterion almost useless,
as verification of the condition (\ref{eq1}) in cases when normality is not already evident is
generally extremely difficult.

Marty's Theorem provides a complete and satisfying answer to the question of
when a family of functions is normal. But if given a family of holomorphic functions of several complex variables
$\{f\}$ and a constant $c>0$  such that for every compact subsets $K\subset\Omega$ we have
$$L_z(\log(1+|f|^2),v) \geq c|v|^2  \textrm{ for all } z \in K, v \in \mathbf C^n ,$$ we see that the Marty's criterion is insufficient
to establish normality.

Denote by $\chi(\cdot,\cdot)$ the chordal distance on $\overline{\mathbf C}$ and by $\delta(\cdot, \cdot)$ the spherical distance on $\overline{\mathbf C}.$ For the definitions and properties of the chordal and spherical distance see, for example, Schiff \cite[pp. 2-3]{Schiff1993}.

If $f$ is a holomorphic function of one complex variables by $f^\sharp(\lambda)$  we denote the spherical derivative
$$f^\sharp(\lambda):=\lim_{h\to 0}\frac{\chi(f(\lambda+h),f(z))}{h}=\frac{|f'(\lambda)|}{1+|f(\lambda)|^2}.$$

\begin{thm} \label{suf} Let some $c > 0$ be given and set
\begin{multline*}\mathcal F:=\{f \textrm{ holomorphic in }\Omega : L_z(\log(1+|f|^2),v) \geq c|v|^2  \textrm{ for all } z \in \Omega, v \in \mathbf C^n \}.\end{multline*}
Then $\mathcal F$ is normal in $\Omega.$
\end{thm}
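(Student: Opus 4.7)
My plan is to obtain a uniform pointwise bound on $|f(z_0)|$ in terms of the distance from $z_0$ to $\partial\Omega$ and then apply Montel's theorem (Theorem \ref{t2.3}). For a fixed $z_0 \in \Omega$ and a unit vector $v \in \mathbf{C}^n$, a direct computation of the Levi form gives $L_z(\log(1+|f|^2),v) = |\partial_v f(z)|^2/(1+|f(z)|^2)^2$, so the one-variable restriction $g(\lambda) := f(z_0 + \lambda v)$ satisfies $(g^\sharp(\lambda))^2 = L_{z_0+\lambda v}(\log(1+|f|^2),v) \geq c$ on every disk $\{|\lambda|<r\}$ contained in $\Omega$. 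The theorem therefore reduces to the following one-variable claim: if $g$ is holomorphic on $\{|\lambda|<r\}$ with $g^\sharp \geq c_0 > 0$, then $|g(0)| \leq \cot(c_0 r)$ whenever $c_0 r < \pi/2$, while no such $g$ can exist when $c_0 r \geq \pi/2$.

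I would prove the one-variable claim by a flow argument. From $g^\sharp \geq c_0$ we have $|g'| \geq c_0(1+|g|^2) > 0$, so $g'$ never vanishes. On $\{g \neq 0\}$ define the unit vector field
\[
w(\lambda) := \frac{g(\lambda)\,\overline{g'(\lambda)}}{|g(\lambda)\,g'(\lambda)|}
\]
and run the flow $\dot\lambda(t) = w(\lambda(t))$ starting from $\lambda(0) = 0$ (the bound is trivial if $g(0) = 0$). The field $w$ is calibrated so that $d|g|^2/dt = 2|g||g'| \geq 2 c_0 |g|(1+|g|^2)$, which rearranges to $d(\arctan|g|)/dt \geq c_0$. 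Integrating yields $\arctan|g(\lambda(t))| \geq \arctan|g(0)| + c_0 t$, and since $\arctan|g| < \pi/2$ the flow cannot persist past time $T_{\max} := (\pi/2 - \arctan|g(0)|)/c_0$ without $|g|$ becoming infinite. Strict monotonicity of $|g|$ along the flow keeps it inside $\{g \neq 0\}$, so $w$ remains defined. Because $|w|=1$ the flow has unit speed, so it stays inside the open disk for time at least $r$; if $T_{\max} < r$ then $|g|$ would blow up at an interior point, contradicting holomorphy. Hence $T_{\max} \geq r$, which gives $|g(0)| \leq \cot(c_0 r)$.

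Assembling: for each $z_0 \in \Omega$, applying the one-variable bound along every complex line through $z_0$ shows $|f(z_0)| \leq \cot(\sqrt{c}\,\operatorname{dist}(z_0, \partial\Omega))$ whenever the argument lies in $(0, \pi/2)$, and the flow contradiction shows $\mathcal F$ is empty on neighborhoods where the distance exceeds $\pi/(2\sqrt c)$. Either way $\mathcal F$ is locally uniformly bounded on $\Omega$, and Montel's theorem then yields normality. The step I expect to be trickiest is the flow argument itself — specifically, verifying that the flow stays in $\{g \neq 0\}$ all the way up to the blow-up time and correctly comparing this blow-up estimate with the exit time from the disk — but the strict monotonicity of $|g|$ and the unit-speed property of $w$ should make both manageable.
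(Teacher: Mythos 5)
Your proposal is correct, but it takes a genuinely different route from the paper. The paper also restricts to complex lines through a point and uses the identity $(g^\sharp)^2=L(\log(1+|f|^2),v)$, but from there it invokes the Grahl--Nevo theorem (a family of meromorphic functions with spherical derivative bounded below by a positive constant is normal) as a black box on each line, and then needs Alexander's theorem (Theorem \ref{t2.4}) to pass from normality on every line to normality of the family on the ball. You instead prove a self-contained, quantitative Landau-type bound --- $|g(0)|\leq\cot(c_0 r)$ for $g$ holomorphic on $\{|\lambda|<r\}$ with $g^\sharp\geq c_0$ --- via the integral-curve argument, and your flow computation checks out: $g'$ is zero-free because $|g'|\geq c_0(1+|g|^2)>0$, the strict monotonicity of $|g|$ keeps the trajectory off $g^{-1}(0)$, the unit-speed bound $|\lambda(t)|\leq t$ plus the standard escape lemma gives existence of the flow up to time $r$, and letting $t\to r^-$ in $\arctan|g(\lambda(t))|\geq\arctan|g(0)|+c_0t<\pi/2$ yields the stated bound. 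This buys you more than the paper proves: $\mathcal F$ is actually locally uniformly bounded (note this exploits that the $f$ are holomorphic rather than merely meromorphic), which is strictly stronger than normality in light of the paper's own remark that normal families need not be locally bounded, and plain Montel then finishes with no appeal to Alexander's theorem or to an external one-variable normality criterion. Two small points to tidy in a final write-up: in the ``emptiness'' step the flow cannot be started at a point where $f$ vanishes, so either start it at a nearby point where $f\neq 0$ (such points exist since $\partial_v f$ is nowhere zero), or sidestep the issue entirely by always applying the one-variable bound with a radius $r<\min\bigl(\operatorname{dist}(z_0,\partial\Omega),\tfrac{\pi}{2\sqrt c}\bigr)$, which already gives local uniform boundedness; and the case $\operatorname{dist}(z_0,\partial\Omega)=\infty$ (e.g.\ $\Omega=\mathbf C^n$) should be mentioned, where the same estimate forces $\mathcal F=\emptyset$.
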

\begin{proof}Since normality is a local property, we can restrict all our considerations
concerning normal families to the unit ball.
We draw an arbitrary complex line $\l_\zeta(\lambda)= \{z\in \mathbf C^n : z= \lambda \zeta\}$ through the point $0\in \mathbf C^n,$ where $\zeta \in \partial B$ (i.e., $|\zeta|=1$) is arbitrary but fix, and $\lambda \in \mathbf C;$ its intersection with $B$ in the $\lambda$-plane obviously correspond to the unit disk $\Delta=\{|\lambda|=1\}.$ If $f \in \mathcal F$ then the function $\ln(1+|f|^2)$ is $C^2$ in $B,$ then by chain rule, for its restriction  to this line, the spherical derivative
$$|f^\sharp(\lambda \zeta)|^2=L_{\lambda \zeta}(\log(1+|f|^2),\zeta) \geq c|\zeta|^2=c.$$
It follows by \cite[Theorem 1]{J.Grahl2012} that the  restriction  of $\mathcal F$ to  each  complex  line  $\l_\zeta$  is  normal. By Theorem \ref{t2.4} $\mathcal F$ is normal in $B.$
\end{proof}

\section{Proof of main result}

Observe that the chordal distance between circles $|w'|=1$ and
$|w''|=2$ is given by $\chi(w',w'')=1/\sqrt{10},$ as does also the chordal distance between the two
circles $|w'|=1$ and $|w''|=1/2.$ Then for any two points  $w',w''$ that satisfy either the two relations $|w'|\leq 1$ and $|w''|\geq 2,$
or the two relations $|w'|\geq 1$ and $|w''|\leq 1/2,$ it must be true that $\chi(w',w'')\geq 1/\sqrt{10}.$

Indeed, the function
        $$ g(t): = \frac{1-t}{\sqrt{2 + 2t^2}}, \qquad 0 < t < 1,$$
   visibly satisfies
    \begin{equation}\label{rb}
     g \text{ is decreasing on the interval } [0,1]
    \end{equation}
   as it is the product of the two positive decreasing functions $1-t$ and
   ${1}/\sqrt{2 + 2t^2}.$
   Now, for complex numbers $w'$ and $w'',$ with $|w'| < 1 < 2 < |w''|,$
     \begin{multline*}\chi(w',w'') \geq \frac{|w''| - |w'|}{\sqrt{(1 + |w'|^2)(1 + |w''|^2)}}> \\
     \frac{|w''| - 1}{\sqrt{2(1 + |w''|^2)}}
             = g(1/|w''|) > \\ g(1/2) [\text{ due to (\ref{rb}) }] =1/\sqrt{10}.
     \end{multline*}
From the symmetry of
   the chordal metric in its two variables it is easy to see that $$\chi(w',w'')>1/\sqrt{10}$$ holds if the point $w'$ belongs
 to the complement of the closed unit disc and $w''$ belongs to the open disc of radius $1/2.$

\begin{proof}[Proof of Theorem \ref{Mandelbrojt}] $\Rightarrow$  Suppose  $z_0$  is an arbitrary point in $\Omega.$  Let $r$ be such that $\overline{B}(z_0,r)\subset\Omega.$ Suppose $\mathcal F$ is normal in $\Omega.$  If $f \in \mathcal F$ its restriction to an arbitrary complex line $\{z \in \mathbf C^n : z=z_0+\lambda v, \lambda \in \mathbf C, |\lambda|<r, |v|=1\},$ i.e., the function
$h_v(\lambda)=f(z_0+\lambda v),$ is holomorphic in disc $\Delta_r=\{\lambda \in \mathbf C: |\lambda|<r\}$ and from (\ref{eq1}) follows that its spherical derivative
$$(h_v^\sharp(\lambda))^2=L_{z_0+\lambda v}(\ln(1+|f|^2),v)<C|v|^2 \textrm{ for all } \lambda \in \overline{U}_r,$$
so that
$$\delta^2(h_v(0),h_v(\lambda))<C|\lambda v|^2  \textrm{ for all } \lambda \in \overline{U}_r.$$
It follows
$$\delta(f(z_0),f(z))<C|z-z_0| \textrm{ for all } z \in {B}(z_0,r)$$
and we conclude that $\mathcal F$ is spherically equicontinuous at point $z_0.$ Since $\chi(w',w'')\leq \delta(w',w'')<\frac{\pi}{2}\chi(w',w'')$ the family $\mathcal F$ is also chordal equicontinuous at $z_0.$
 It follows that about each point $z_0 \in \Omega,$ there is a ball
${B(z_0,r)}\subseteq \Omega$ in which
\begin{equation}\label{po}
\chi(f(z),f(z_0))<1/\sqrt{10}
\end{equation}
for all $f \in \mathcal F$ and for all $z$ in $B(z_0,r).$

Let $z_0$ be arbitrary, but fixed point in $D.$ If, say, $|f(z_0)|\leq 1$ then $|f(z)|< 2$ for $z\in B(z_0,r),$ $f \in \mathcal F,$ (otherwise,
$\chi(f(z),f(z_0))>\chi(1,2),$ a contradiction with the inequality (\ref{po})) whereas $|f(z_0)|> 1,$
then $|f(z)|> 1/2$ (otherwise, $\chi(f(z),f(z_0))>\chi(1,1/2)=\chi(1, 2),$ a contradiction with the inequality (\ref{po})).
Therefore, if  $z\in B(z_0,r),$ either
$$ |f(z)|< 2 \textrm{ or } \frac{1}{|f(z)|}<2, \ \ \ \ f \in \mathcal F.$$
It follows that $\mathcal F$ can be expressed as the union of two families
$$\mathcal J:=\{f \in \mathcal F:|f(z_0)|\leq 1, |f(z)|<2, z\in B(z_0,r)\},$$
$$\mathcal H:=\{f \in \mathcal F: \frac{1}{|f(z_0)|}<1, \frac{1}{|f(z)|}<2, z\in B(z_0,r)\}.$$

Suppose first that $\{f_j\}\in \mathcal F$ and $f_j \to 0$ as $j \to \infty.$  Then $|f_j(z)|<1/2$ for all $z\in \overline{B(z_0,r)}$ and $j$  sufficiently
large.
Set $$\mathcal J_1:=\mathcal J\setminus \{f \in \mathcal J, |f(z)|<1/2, z\in B(z_0,r)\}.$$
There exists a constant $\varepsilon>0$ such that $\min \{|f(z)|, z\in \overline{B(z_0,r)}\}>\varepsilon $ for all $f \in \mathcal J_1.$ We  argue  by
contradiction  and  suppose  otherwise.  Then there exist a sequence of functions $\{f_k\},$ $f_k \in \mathcal J_1,$  and a sequence of points $z_k \in
\overline{B(z_0,r)}$ such that
$f_k(z_k)\to 0$ as $k \to \infty$ and $z_k\to z_1\in \overline{B(z_0,r)}$ as $k \to \infty.$ Since $\mathcal J_1$ is a normal family of zero-free functions
$\{f_k(z)\}$ has a subsequence $\{f_l(z)\}$
which converges uniformly on $\overline{B(z_0,r)}$  to a  function $f$ holomorphic in $\Omega.$ Since
$$|f_l(z_l)-f(z_1)|\leq |f_l(z_l)-f(z_l)|+|f(z_l)-f(z_1)|$$ it follows that $f(z_1)=0.$ By multi-dimensional version of Hurwitz's theorem  $f\equiv 0.$
Hence $|f_l(z)|<1/2$ for all $z \in B(z_0,r)$ and all $l$  sufficiently large. This
is a contradiction with the assumption that $f_l(z)\in \mathcal J_1.$

 If  $f(z)\in \mathcal J_1$  and since $f$ is zero-free holomorphic function in $\Omega$ we have
 $$\frac{\varepsilon}{2}\leq\frac{|f(z)|}{|f(w)|}\leq \frac{2}{\varepsilon}$$
  for all points $z$ and $w$ in $\overline{B(z_0,r)}$ and all $f \in \mathcal J_1.$ (The first inequality also holds because $z$ and $w$ play symmetric
  roles.)

If  $f \in \mathcal J,$ and if $ |f(z)|<1/2$ for all $ z\in B(z_0,r)$ then $1/f$ is holomorphic in $\Omega$ because $f$ never vanishes; moreover
$|(1/f)(z)|>2$ on    $B(z_0,r).$ Hence $\ln|(1/f)(z)|$  is  a positive pluriharmonic in $B(z_0,r).$   Pluriharmonic functions form a subclass of the class
of harmonic functions in  $B(z_0,r)$ (obviously proper for $n > 1$). So by
Harnack's inequality  there exists some  constant $C=C(\overline{B(z_0,r_0)},B(z_0,r)),$ $C\in (1,\infty),$ $r_0<r,$ such that
$$\frac{1}{C}\leq\frac{\ln(1/|f(z)|)}{\ln(1/|f(w)|)}\leq C \textrm{ for all } z \textrm{ and } w \textrm{ in } \overline{B(z_0,r_0)}.$$
 Hence
 $$ \frac{1}{C}\leq\frac{\ln|f(z)|}{\ln|f(w)|}\leq C \textrm{ for all } z \textrm{ and } w \textrm{ in } \overline{B(z_0,r_0)}.$$

Since the same proof works for functions in $\mathcal H$ we have for all $f \in \mathcal H$ either
$$\frac{\varepsilon}{2}\leq\frac{|f(z)|}{|f(w)|} \leq \frac{2}{\varepsilon} \textrm{ for all } z \textrm{ and } w \textrm{ in }
\overline{B(z_0,r_0)}$$
or
$$  \frac{1}{C}\leq\frac{\ln|f(z)|}{\ln|f(w)|}\leq C \textrm{ for all } z \textrm{ and } w \textrm{ in } \overline{B(z_0,r_0)}.$$

 Hence the set of quantities $L(f,B(z_0,r_0)),$ $f \in \mathcal F,$
is bounded  and we obtain the desired result.

$\Leftarrow$ Fix  an arbitrary point $z_0$ in $\Omega$ and define the families
$\mathcal J$ and $\mathcal H$ by
$$\mathcal J : =\{f \in \mathcal F : |f(z_0)|\leq 1\},$$
$$\mathcal H : =\{f \in \mathcal F : |f(z_0)|>1\}.$$
  We first show that $\mathcal J$ is normal at $z_0.$ Let a sequence $\{f_j\}$ in $\mathcal J$ be given. First choose $r_0 > 0$
    small enough that the last line in the statement of the theorem holds. The following two cases exhaust all the possibilities for sequence $\{f_j\}:$
\begin{itemize}
\item[ (e)] there exists a subsequence $\{f_{j_k}\}$ such that for any $k\in\mathbb N$ the function $\log|f_{j_k}|$ does not vanish in~$B(z_0,r_0)$;
\item[ (f)] for each $j\in\mathbb N$ there exists $z_j\in B(z_0,r_0)$ such that $\log|f_j(z_j)|=0$.
\end{itemize}

In case  (e) we have that  $|f_{j_k}|<1$ in $B(z_0,r_0)$ for all elements of the sequence. Such a subsequence is normal in $B(z_0,r_0)$ by Montel's theorem and hence we are done in case  (e).

In case  (f) we have $m(f_j,B(z_0,r_0))=+\infty$ for all $j\in\mathbb N$. Therefore, according to the hypothesis,  $m'(f_j,B(z_0,r_0))<C$ for all $j\in\mathbb N$ and some   finite constant $C.$  It follows that  $|f_j|<C|f_j(z_0)|\leq C$ in $B(z_0,r_0)$ for all $j\in\mathbb N$, which means that $\{f_j\}$ is a normal family in $B(z_0,r_0)$ and hence finishes the proof in case (f).

 If $f \in \mathcal H$  then $1/f$ is holomorphic
on $\Omega$ because $f$ never vanishes. Also $1/f$ never vanishes and $1/|f(z_0)| < 1.$ Hence reasoning similar to that in the above proof shows that
$\mathcal {\widetilde{H}} : =\{1/f: f\in \mathcal H\} $ is also normal in  $B(z_0,r_0).$ So if $\{f_j\}$ is a sequence in $\mathcal H$
there is a subsequence $\{f_{j_k}\}$ and a holomorphic function $h$ on $B(z_0,r_0)$ such that $\{1/f_{j_k}\}$ converges locally uniformly in $B(z_0,r_0)$
to $h.$ By the generalized   Hurwitz  Theorem,
either $h\equiv 0$ or $h$ never vanishes. If $h\equiv 0$ it is easy to see that $f_{j_k}(z)\to \infty $
uniformly on compact subsets of $B(z_0,r_0).$ If $h$ never vanishes then $1/h$ is holomorphic function in $B(z_0,r_0)$
and it follows that $f_{j_k}(z)\to  1/h(z)$ uniformly on compact subsets of $B(z_0,r_0).$ Therefore $\mathcal H$ is normal at $z_0.$

Since $\mathcal J$ and $\mathcal H$ are normal at $z_0,$ so that the union $\mathcal F$ is normal  in
$z_0.$
Since normality  is  a  local  property,  $\mathcal F$  is a normal  family in $\Omega.$ This completes the
proof of the theorem.
\end{proof}

\begin{rem} It should be pointed out that the above theorem is not true if the condition ``for each point $z_0 \in \Omega$ there exists a ball $B(z_0,r_0)\subset
\Omega$ such that the  the set of quantities
$L(f,B(z_0,r_0)),$ $f\in \mathcal F,$ is bounded'' is replaced by the condition ``the corresponding family of functions given by $|g(z)|/|g(w)|$ is locally bounded on $\Omega \times \Omega$''
(cf. \cite[Theorem 2.2.8]{Schiff1993}).

To see this, consider the family $\mathcal F:=\{z^j\}_{j=1}^\infty$ of holomorphic functions. If we take  $\mathcal A:=\{z\in \mathbf C : 1/2<|z|<1\}$, then
$\mathcal F|_{\mathcal A}$ is a set of bounded (by 1) zero-free holomorphic functions in $\mathcal A$ so Montel's theorem
guarantees that $\mathcal F$ is normal. It is plain by inspection  that the family $\Big \{\frac{|z|^j}{|w|^j}\Big\}_{j=1}^\infty$ is not locally bounded
on
$\mathcal A\times\mathcal A,$ while $\Big\{\frac{\ln|z|^j}{\ln|w|^j}\Big\}_{j=1}^\infty$ is a locally bounded family on $\mathcal A\times\mathcal A.$ Hence
Theorem 2.2.8 in \cite{Schiff1993} is not true.

\end{rem}

\begin{rem} An alternative proof of necessity in Theorem \ref{Mandelbrojt} can be given: 
Fix a point $z_0$ in $\Omega.$    Suppose that $\mathcal F$ is normal in $\Omega,$ a ball $\overline{B(z_0, r_0)} \subset \Omega,$ but  the set of quantities $L(f,B(z_0,r_0)),$ $f\in \mathcal F,$  is unbounded. Then there exists a sequence $\{f_j\} \subseteq \mathcal F$ such that
\begin{equation}\label{eq}
L(f_j,B(z_0,r_0))>j \textrm{ for all } j\in \mathbb N.
\end{equation}

Choose $r>r_0$ such that $B(z_0,r)\subset \Omega.$
The normality of $\{f_j\}$ provides a subsequence $\{f_{j_k}\} $  converging locally uniformly in $B(z_0,r)$ to a  holomorphic function $f$ or a subsequence $\{f_{j_k}\}$ which converges locally uniformly to  $\infty$ in $B(z_0,r).$
Since each $f_{j_k}$ is  zero-free in $B(z_0,r)$ the function $f$ is either zero-free in $B(z_0,r)$ or $f\equiv 0$  by the generalized Hurwitz theorem.

Therefore for the given sequence $\{f_j\}$ there is a subsequence $\{f_{j_k}\}$ for which one of the following holds:
\begin{itemize}
\item[(a)] $\{f_{j_k}\}$ converges  uniformly on $B(z_0,(r_0+r)/2)$ to the  function $f\equiv 0;$
\item[(b)] $\{f_{j_k}\}$  converges  uniformly on $ B(z_0,(r_0+r)/2)$ to a  holomorphic function $f$ which is zero-free on $B(z_0,r);$
\item[(c)] $\{f_{j_k}\}$  converges  uniformly on $ \overline{B(z_0,(r_0+r)/2)}$ to  $\infty.$
\end{itemize}

Since  $j_k\geq k$ it follows readily from (\ref{eq}) that
\begin{equation}\label{eq1}
L(f_{j_k},B(z_0,r_0))>k \text{ for all } k \in \mathbb N.
\end{equation}

In case (a) (respectively in case (c)) we have $|f_{j_k}(z)|<1/2$ (respectively  $|f_{j_k}(z)|>2$)  for all $z\in\overline{B(z_0,(r_0+r)/2)}$ and all $k \in \mathbb N$ sufficiently large.   It follows that $\ln|f_{j_k}(z)|$  is  a strongly negative (respectively  positive) pluriharmonic function in $B(z_0, (r_0+r)/2).$   Pluriharmonic functions form a subclass of the class
of harmonic functions in  $B(z_0, (r_0+r)/2)$ (obviously proper for $n > 1$). So by
Harnack's inequality  there exists some  constant $C=C(\overline{B(z_0,r_0)},B(z_0, (r_0+r)/2),$ $C\in (1,\infty),$ such that
 $$ \frac{\ln|f_{j_k}(z)|}{\ln|f_{j_k}(w)|}\leq C \textrm{ for all } z \textrm{ and } w \textrm{ in } \overline{B(z_0,r_0)},$$
and hence $m(f_{j_k},B(z_0,r_0)\leq C$ for all $k \in \mathbb N$ sufficiently large.

 In case (b) we have $\lim_{k\to \infty}|f_{j_k}(z)|=f(z)$ for all  $z\in B(z_0,(r_0+r)/2).$ It follows
 $$\lim_{k\to \infty}\frac{|f_{j_k}(z)|}{|f_{j_k}(w)|}=\Big| \frac{f(z)}{f(w)}\Big|\textrm{ uniformly for } z \textrm{ and } w \textrm{ in } \overline{B(z_0,r_0)}.$$
The function $f(z)/f(w)$ is holomorphic  on $B(z_0,r)\times B(z_0,r),$  hence  $m'(f,B(z_0,r_0))$ is bounded. Because $m'(f_{j_k},B(z_0,r_0))\to m'(f,B(z_0,r_0))$ as $k \to \infty,$ we conclude that  there exists some finite constant $C'$ such that
$m'(f_{j_k},B(z_0,r_0))<C'$  for all $k \in \mathbb N$ sufficiently large.

 Since $L(f_{j_k},B(z_0,r_0))$ is the minimum of $m(f_{j_k},B(z_0,r_0))$ and $m'(f_{j_k},B(z_0,r_0))$
 the proof above gives that  the set of quantities $L(f_{j_k},B(z_0,r_0)),$ $k \in \mathbb N,$
is bounded, which is a contradiction to (\ref{eq1}).
\end{rem}

\bibliography{myMandelbrojt}
\bibliographystyle{plain}

\end{document}